\documentclass[12pt,a4paper]{amsart}
\usepackage{latexsym}
\usepackage{amsfonts}
\usepackage[mathscr]{eucal}
\usepackage{epsfig}
\usepackage{a4wide}

\frenchspacing

\parindent0pt
\def\@copyright{}
\setlength{\parskip}{5pt plus 2pt minus 1pt}
\setcounter{tocdepth}{2}

\newtheorem{definition}{Definition}[section]

\newtheorem{theorem}{Theorem}[section]

\newtheorem{example}{Example}[section]

\newtheorem{lemma}{Lemma}[section]

\newtheorem{remark}{Remark}[section]

\newcommand{\N}{ \mathbb{N} }

\newcommand{\R}{ \mathbb{R} }

\newcommand{\calF}{\mathcal{F}}

\newcommand{\calH}{\mathcal{H}}
\newcommand{\calK}{\mathcal{K}}

\newcommand{\calR}{\mathcal{R}}

\newcommand{\calS}{\mathcal{S}}
\newcommand{\calT}{\mathcal{T}}

\newcommand{\eins}{{\mathbf 1}}

\begin{document}
\bibliographystyle{chicago}

\pagestyle{plain}
\begin{titlepage}
\pagenumbering{arabic}
\title[???]{}
\date{September 2003}
\end{titlepage}

\maketitle

\begin{center}
  \Large
  OPTIMAL SEQUENTIAL KERNEL DETECTION
  FOR DEPENDENT PROCESSES\\[1cm]
\end{center}
\vskip 1cm
\begin{center}
  Ansgar Steland\footnote{Address of correspondence: Ansgar Steland, Ruhr-Universit\"at Bochum,
Fakult\"at f\"ur Mathematik, Mathematik 3 NA 3/71, Universit\"atsstr. 150,
D-44780 Bochum, Germany.}\\
  Fakult\"at f\"ur Mathematik\\
  Ruhr-Universit\"at Bochum, Germany\\
  ansgar.steland@ruhr-uni-bochum.de
\end{center}

\newpage

\begin{abstract}
In many applications one is interested to detect certain
(known) patterns in the mean of a process with smallest delay.
Using an asymptotic framework which allows to capture that feature,
we study a class of appropriate sequential nonparametric kernel procedures 
under local nonparametric alternatives. 
We prove a new theorem on the convergence
of the normed delay of the associated sequential detection procedure
which holds for dependent time series under a weak mixing condition.
The result suggests a simple procedure to select a kernel from a 
finite set of candidate kernels, and therefore may also be of interest from a practical
point of view. Further,
we provide two new theorems about the existence and an explicit representation of 
optimal kernels minimizing the asymptotic normed delay.
The results are illustrated by some examples.\\ \vskip 2cm

{\bf Keywords:} Enzyme kinetics, financial econometrics,
nonparametric regression, statistical genetics, quality control.
\end{abstract}

\newpage
\section*{Introduction}

A classical problem of sequential analysis is to detect a location shift
in an univariate time series by a binary decision procedure.
More generally, we aim at testing sequentially whether the deterministic
drift function vanishes (in-control or null model) or is equal to an 
out-of-control or alternative model.
There are various important fields where such methods can be applied.
We shall first briefly describe some fields of applications which 
motivated the topics discussed in this article.

Sequential methods are applied for a long time in quality control
and statistical process control,
where interest focuses on detecting the first time point 
where a production process fails. Failures of a machine may produce
jumps in the sequence of the observed quality characteristic, whereas wastage
may result in smooth but possibly nonlinear changes of the mean. In recent years
there has been considerable interest in methods for dependent time series.

An active area is the on-line monitoring of sequential data streams from capital markets.
Indeed, an analysts task is to detect structural changes in financial data as soon as
possible in order to trigger actions as portfolio updates or hedges. Thus, methods designed
to support sequential decision making are in order.

A further potential field of application is the analysis of microarray time series data consisting
of gene expression levels of genes. Down- or upregulated genes can have important interpretations,
e.g., when characterizing cancer cells, and the sequential detection of such level changes
from time series could be of considerable value. 

In biology sequential methods may be useful to
study the temporal evolution of enzyme kinetics in order to detect time points
where a reaction starts or exceeds a prespecified threshold.
Often it is possible to associate certain (worst case) temporal patterns with
phenomena, e.g., symptoms or reactions to stimuli, which are of biological
interest to detect.
In order to understand complex biological systems it may be
useful to estimate such change points sequentially instead of applying
a posteriori methods, since the behavior of the real biological system 
depends only on the past. 

A basic model to capture level changes as motivated by the above application areas is as follows.
Suppose we are observing a possibly non-stationary stochastic
process, $ \{ \widetilde{Y}(t) : t \in \calT \} $, in continuous or discrete
time $ \calT = [0, \infty) $ with $ E | \widetilde{Y}(t) | < \infty $.
Consider the following decomposition of the process in a 
possibly non-homogenous drift $ m(t) = E \widetilde{Y}(t) $ and
an error process $ \{ \widetilde{\epsilon}_t : t \in \calT \} $,
$$
  \widetilde{Y}(t) = m(t) + \widetilde{\epsilon}_t, \qquad
  (t \in \calT).
$$
Using the terms of statistical process control, we will say that
the process is in-control, if $ m(t) = 0 $ for each $ t \in \calT $,
and we are interested in kernel control charts, i.e., 
sequential kernel-based smoothing methods, to detect
the first time point where the process gets out-of-control.
Clearly, from a testing point of view we are sequentially testing
the null hypothesis $ H_0: m = 0 $ against the alternative
$  m \not= 0 $. A common approach to the problem is to define
a stopping rule (stopping time), $ N $, based on some statistic 
that estimates at each
time point $ t $ a functional of $ \{ m(s) : s \le t \} $. Having defined
a stopping rule, the stochastic properties of the associated delay,
defined as $ N $ minus the change-point, are of interest.

Well-known stopping rules rely on CUSUM-, EWMA-, or Shewhart-type control
charts which can often be tuned for the problem at hand. 
These proposals are motivated by certain optimality criteria and have been
studied extensively in the literature. 
First publications are due to Page (1954, 1955), 
Girshick and Rubin (1952). Optimality properties of the CUSUM procedure
in the sense of Lorden (1971), i.e., minimizing the conditional expectation
of the delay given the least favorable event before the change-point
was first shown by Moustakides (1986) and, using Bayesian arguments, by
Ritov (1990, 1997) and Yakir (1997).
For a discussion of EWMA control schemes see Schmid and Schoene (1997).
Yakir, Krieger, and Pollak (1999) studied first order optimality of
the CUSUM and Shiryayev-Roberts procedures to detect a change in
regression. Their result
deals with optimal stopping rules in the sense that the expected
delay is minimal subject to a constraint on the average run length
to a false alarm. However, that result is restricted to independent
and normally distributed observations.
A kernel-based a posteori procedure for detecting multiple change points which is in the spirit
of the present article has been studied by Hu\v{s}kov\'a and Slaby (1997) and
Grabovsky, Horv\'ath and Hu\v{s}kov\'a (2000). For reviews we refer to 
Hu\v{s}kov\'a (1991) and Antoch, Hu\v{s}kov\'a, and Jaru\v{s}kov\'a (2002).

The present paper provides an asymptotic analysis with local alternatives,
which holds for a rich class of strongly mixing dependent processes.
Our stopping rule uses a Priestley-Chao type kernel regression estimate
which relies on a weighted sum of past observations without assuming knowledge
of the alternative regression function or an estimate of it.
The theory and application of such smoothing methods is nicely described in
Hart (1997) or H\"ardle (1990). However, it is important to note that 
framework and assumptions of the present paper are different from classical nonparametric regression.
Whereas in nonparametric regression it is assumed that the bandwidth $ h $ tends to $0$ such that
$ n h \to \infty $, $n$ denoting the (fixed) sample size, and $ \max t_i - t_{i-1} \to 0 $, as
$ n \to \infty $, our monitoring approach works with $ h \to \infty $ and 
$ t_i - t_{i-1} \ge \Delta $ for all $ i $.

Sequential smoothing procedures, where a regression estimate is evaluated at the
current observation, have been studied for various change-point problems,
e.g. to monitor the derivative of a process mean
(Schmid and Steland, 2000). Note also that they are implicitly applied in 
classical (fixed sample) nonparametric regression at the boundary.
Of course, it is of special interest to study the
simultaneous effect of both the kernel and the alternative drift 
on the asymptotic normed delay of the associated stopping rule. We prove a
limit theorem addressing this question for general mixing processes.
We then ask how to optimize the procedure w.r.t. the smoothing kernel
for certain regression alternatives. It turns out that an explicit
representation of the optimal kernel can be derived for arguments not
exceeding the associated asymptotic optimal delay.
For simple location shifts first results for the normed delay have been obtained
by Brodsky and Darkhovsky (1993, 2000) for sequential kernel smoothers as studied
here. When jumps are expected, jump-preserving estimators as discussed
in Lee (1983), Chiu et. al (1998), Rue et al. (2002), 
and Pawlak and Rafaj\l owicz (2000, 2001) are an attractive 
alternative, since smoothers tend to smooth away jumps.
Convergence results for the normed delay of jump-preserving stopping rules have been studied 
in Steland (2002a), where upper bounds for the asymptotic normed delay are
established. For a Bayesian view on the asymptotic normed delay and optimal prior choice see 
Steland (2002b). On-line monitoring has been recently reviewed by Antoch and Jaru\v{s}kov\'a (2002)
and Fris\'en (2003). We also refer to Siegmund (1985).

We shall now explain the asymptotic framework of our approach more detailed.
In order to evaluate a detection procedure we will consider
local alternatives which converge to the in-control model
as the (effective) sample size of the procedure tends to
infinity. Simultaneously, the false-alarm rate will tend to
$0$. The local nonparametric alternatives studied here are given
as a parameterized family of drift functions,
$$
  m(t) = m(t;h) = \eins(t \ge t_q^*) m_0( [t-t_q^*]/h ),
$$
where $ t_q^* \in \calT $ stands for the change-point
assumed to be fixed but unknown, 
and $ h > 0 $ is a bandwidth parameter of the detection 
procedure introduced below
determining the amount of past data used by the procedure.
We assume $ h \in \calH $ for some countable and unbounded set $ \calH \subset \R_0^+ $.
$ m_0 $ denotes the generic model alternative inducing the
sequence of local alternatives. We assume that $ m_0 $ is a piecewise Lipschitz
continuous function.
Our asymptotics will assume $ h \to \infty $.
Consequently, for each fixed $ t \in \calT $ we have
$ m_0(t;h) \to m_0(0) $, as $ h \to \infty $, if $ m_0 $ is
continuous in $ 0 $.
In this sense, $ m(t;h) $ defines a sequence of of local alternative
if $ m_0(0)=0 $. As we shall see below,
$ h $ coincides with the bandwidth parameter determining the sample size
of the kernel smoother
on which the sequential detection procedure is based on. It turns out
that the rate of convergence of the local alternative has to be related
to the bandwidth parameter in this fashion to obtain a meaningful
convergence result.

Assume the process is sampled at a sequence of fixed 
ordered time points, $ \{ t_n : n \in \N \} $, inducing a
sequence of observations $ \{ Y_n : n \in \N \} $.
Put $ m_{nh} = m(t_n;h) $ and 
$ \epsilon_n = \widetilde{\epsilon}(t_n) $ to obtain
$$
  Y_{nh} = m_{nh} + \epsilon_n, \qquad (n \in \N).
$$
Let $ q $ denote the integer ensuring $ t_q = \lfloor t_q^* \rfloor + 1 $. 
Then 
$$ 
  m_{nh} = \eins(t_n \ge t_q) m_0([t_n-t_q]/h), \qquad n \in \N.
$$
We do not assume that the time design $ \{ t_n \} $ becomes dense
in some sense. In contrary, we use time points having a fixed minimal
distance, and for simplicity we shall assume $ t_n = n $ 
for all $ n \in \N $. More general time designs will be discussed at the end
of Section~\ref{Asymptotics}.

The organization of the paper is as follows. Section~\ref{defs} 
provides basic notation, assumptions, and the definition of the
kernel detection procedure.
The limit theorem for the normed delay is established in
Section~\ref{Asymptotics}. The result holds for a wide
class of generic alternatives satisfying a mild integrability
condition, provided that the smoothing kernel is Lipschitz continuous.
Section~\ref{Optimal} provides the result on the optimal
kernel choice which minimizes the asymptotic normed delay. We provide both
an existence theorem and a stronger representation theorem.
Due to the close relationship of the optimal kernel and 
the generic alternative, this results requires both the
kernel and the regression alternative to be continuous.
We illustrate the results by a couple of examples.

\section{Sequential kernel detection and assumptions}
\label{defs}

We consider the following sequential Priestley-Chao type kernel smoother
\begin{equation}
\label{DetectionSmoother}
  \widehat{m}_{nh} = \sum_{i=1}^{n} K_h( t_i-t_n ) Y_{ih},
\end{equation}
$ n \in \N $. 
We call $ \widehat{m}_{nh} $ a {\em sequential} smoother, since at the $ n$-th
time point the Priestley-Chao type estimator $ t \mapsto \sum K_h( t_i - t ) Y_{ih} $ 
is only evaluated for $ t = t_n $. Here and in the sequel $ K_h(z) = K(z/h)/h $ denotes
the rescaled version of a smoothing kernel $ K $ required to be a centered, symmetric, and
Lipschitz continuous probability density. The associated $ \{ 0,1 \} $-valued 
sequential decision rule is given by
\begin{equation}
\label{DecisionRule}
  d_{nh} = \eins( | \widehat{m}_{nh} | > c ),
\end{equation}
or, $ d_{nh} = \eins( \widehat{m}_{nh} > c ) $ (one-sided version),
i.e., a signal is given if (the absolute value of) $ \widehat{m}_{nh} $
exceeds a prespecified non-negative threshold $ c $.

The corresponding stopping time is given by
$$
  N_h = \inf \{ n \in \N : d_{nh} = 1 \}
$$
with $ \inf \emptyset = \infty $. In addition, define the normed delay
$$
  \rho_h = \max \{ N_h - t_q, 0 \} / h.
$$
If the kernel vanishes outside the interval $ [-1,1] $,
the effective sample size of the detection procedure is equal to
$ h $. Then $ \rho_h $ is simply the delay expressed as a percentage
of the effective sample size.

In this paper we will measure the efficiency of a
decision procedure by the asymptotic behavior of its 
associated normed delay. We confine ourselves to stopping times meaning that
decisions at time $ n $ only depend on $ Y_1, \dots, Y_n $.

Throughout the paper we shall assume that
$ \{ \varepsilon_n \} $ is a stationary $ \alpha $ mixing process
in discrete time $ \N $. Recall that $ \alpha $ mixing (strongly mixing)
means that $ \alpha(k) \to 0 $, if $ k \to \infty $, 
where $ \alpha(k) $ denotes the $ \alpha $-mixing coefficient defined by
$$
  \alpha(k) = \sup_{A \in \calF_{-\infty}^0, B \in \calF_k^{\infty}}
  | P( A \cap B ) - P(A) P(B) |.
$$
Here $ \calF_{k}^{l} = \sigma( \varepsilon_k, \dots, \varepsilon_l ) $ stands for the
$ \sigma $-field induced by the random variables $ \varepsilon_k, \dots, \varepsilon_l $,
$ -\infty \le k \le l \le \infty $. Recall that $ \alpha $-mixing is a weak
notion of dependence which is implied by $ \beta $- and $ \rho$-mixing.
For a general discussion of mixing coefficients and related limit theorems we refer
to Bosq (1996). 
The regularity assumptions on the mixing coefficients of $ \{ \varepsilon_n \} $ will be given later.
In addition, we assume $ \{ \varepsilon_n \} $ satisfies Cramer's condition,
i.e.,
$$
  E e^{c_1 |\varepsilon_1|} < \infty
$$
for some positive constant $ c_1 $.

The smoothing kernel $ K $ used to define the weighting scheme is taken from the class 
$$ 
  \calK 
  = \left\{ K: \R \to [0,\infty) : \int K(s) \, ds = 1, K(s)=K(-s) \right\}
  \cap \mbox{Lip}
$$
of all symmetric probability densities on the real line which are Lipschitz
continuous, i.e., there exists a Lipschitz constant $ L_K $ ensuring
$$
  | K(z_1) - K(z_2) | \le L_K | z_1 - z_2 |, \qquad (z_1, z_2 \in \R).
$$
For our optimality results we will have to impose further conditions which will restrict the
class $ \calK $.

Finally, we also need the following conditions. It is assumed that 
$ m_0:[0,\infty) \to \R  $ is non-negative and satisfies, jointly with $ K \in \calK $,
the following integrability condition,
$$
  \biggl| \int_0^x K(s-x)m_0(s) \, ds  \biggr| < \infty \qquad (\forall x>0).
$$

\section{Asymptotics for the normed delay}
\label{Asymptotics}

In this section we establish both an assertion about the
in-control false-alarm rate and a limit theorem for the normed
delay for general local nonparametric alternatives under dependent sampling.

We need the following specialized 
large deviation result for the control statistic $ \widehat{m}_{nh} $.
A related large deviation result for (unweighted) sums of random variables satisfying Cramer's
condition can be found in Bosq (1996, Th. 1.4).
For our purposes we need the following specialised version for mixing time series.

Define $ S_{nh} = \sum_{i=1}^n K([t_i-t_n]/h) \epsilon_i $, $ n \in \N,$ $ h \in \calH $. 
For two real sequences $ (a_h) $ and $ (b_h) $ with $ b_h \not= 0 $ for sufficiently large $h $,
we write $ a_h \sim b_h $ if $ a_h/b_h \to 1 $, as $ h \to \infty $ and $ a_h \sim b_h $ up to a
constant if $ a_h / b_h \to c $, $ h \to \infty $, for some constant $ c $.

\begin{theorem} 
\label{MyLemma}
  Assume $ n/h \sim \zeta $ with $ 0 < \zeta < \infty $.
  Then the following assertions hold true.
  \begin{itemize}
    \item[(i)] 
      For each $ x > 0 $ 
      $$
        P( S_{nh} > xh ) =
       O\left( \frac{n}{\sqrt{h}} e^{-c_2 \sqrt{h} } \right) + O( n \alpha( \sqrt{h} ) ) + O( \sqrt{h} e^{-c_1 h } ) = o(1),
      $$
      as $ h \to \infty $, provided $ \lim_{k \to \infty} k^2 \alpha(k) = 0 $.
    \item[(ii)] If $ \sum_k k^2 \alpha(k) < \infty $, then for each $ x > 0 $
      $$ \sum_{h \in \calH} P( S_{nh} > xh ) < \infty $$ 
      implying 
      $$ P( S_{nh} > xh, i.o. ) = 0. $$
  \end{itemize}
\end{theorem}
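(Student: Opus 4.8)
The plan is to read this off an adaptation of the Bernstein blocking inequality behind Bosq (1996, Theorem~1.4), applied to the weighted triangular array $S_{nh}=\sum_{i=1}^{n} w_i\epsilon_i$ with deterministic weights $w_i=K([t_i-t_n]/h)$. The first step is a set of elementary reductions. Since $K\in\calK$ is a bounded probability density, $0\le w_i\le\|K\|_\infty=:M$, so $|w_i\epsilon_i|\le M|\epsilon_i|$ and the array inherits Cramer's condition, $Ee^{(c_1/M)|w_i\epsilon_i|}\le Ee^{c_1|\epsilon_1|}<\infty$; it is centered because $E\epsilon_i=0$; and, the weights being deterministic, its mixing coefficients are dominated by $\alpha(\cdot)$. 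Thus it suffices to bound the upper tail of a centered, $\alpha$-mixing array satisfying Cramer's condition, where the threshold $xh$ is of the same order as the sample size ($n/h\sim\zeta$ gives $xh/n\to x/\zeta>0$). This is a genuine large deviation: $\Var(S_{nh})$ is of order $h$ (the covariances being summable, see below), so $xh$ corresponds to a deviation of order $\sqrt h$ standard deviations.

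Next I would partition $\{1,\dots,n\}$ into alternating big and small blocks of common length $p=\lfloor\sqrt h\rfloor$, producing $r$ big blocks $U_1,\dots,U_r$ with $r$ of order $n/\sqrt h$, and split $\{S_{nh}>xh\}$ over the big-block sum, the small-block sum, and a negligible remainder. A mixing coupling inequality of Volkonskii--Rozanov/Bradley type then replaces $U_1,\dots,U_r$ by independent copies with identical marginals, at a total cost of order $r\,\alpha(p)$, which is bounded crudely by the stated second term $O(n\,\alpha(\sqrt h))$. The choice $p\sim\sqrt h$ is what ties the three error orders together: it fixes the mixing gap appearing in $\alpha(\sqrt h)$, and it simultaneously caps the Chernoff parameter below.

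It remains to bound the tail of the independent block sum by a Chernoff argument, $P(\sum_j U_j^*>xh/3)\le e^{-\lambda xh/3}(Ee^{\lambda U_1})^r$. Here the within-block dependence prevents factorization, but a H\"older bound together with Cramer's condition gives a \emph{block} Cramer condition valid only for $\lambda$ of order $1/p\sim1/\sqrt h$, and on that range $Ee^{\lambda U_1}\le\exp(C\lambda^2\Var(U_1))$ with $\Var(U_1)\le p\,M^2\sum_{k\ge0}|\Cov(\epsilon_0,\epsilon_k)|$; the covariance sum is finite by Davydov's inequality and the assumed decay of $\alpha(\cdot)$, since Cramer's condition supplies all moments. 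Optimizing $\lambda$ subject to the cap $\lambda$ of order $1/\sqrt h$ makes $\lambda\cdot xh$ of order $\sqrt h$ and yields the leading term $O(\frac{n}{\sqrt h}e^{-c_2\sqrt h})$, the prefactor being the block count inherent in Bosq's inequality. Finally, truncating the $\epsilon_i$ at level $h$ controls the extreme observations needed to justify the block estimates; by Markov's inequality and Cramer's condition this contributes the third term $O(\sqrt h\,e^{-c_1 h})$ (the polynomial factor being the block count, the exponential rate the Cramer constant $c_1$).

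Collecting the three contributions gives the displayed bound, and the two assertions follow by inspecting them. For (i) the exponential terms are $o(1)$ trivially, while $n\,\alpha(\sqrt h)$ is, up to the constant $\zeta$, of the form $(\sqrt h)^2\alpha(\sqrt h)$, which tends to $0$ under the hypothesis $k^2\alpha(k)\to0$ read along $k=\sqrt h$ --- this is exactly where that hypothesis is used. For (ii) the exponential terms are readily summable over the countable unbounded set $\calH$ (they decay geometrically in $\sqrt h$), whereas $\sum_{h\in\calH}n\,\alpha(\sqrt h)$ is of the form $\sum(\sqrt h)^2\alpha(\sqrt h)$ and is finite precisely because $\sum_k k^2\alpha(k)<\infty$; the almost-sure statement $P(S_{nh}>xh,\ \text{i.o.})=0$ is then the first Borel--Cantelli lemma. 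The genuinely delicate step is the block moment-generating-function estimate: controlling $Ee^{\lambda U_1}$ for a block of \emph{dependent}, only sub-exponentially integrable variables so that the cap on $\lambda$ produces the $\sqrt h$-rate rather than a vacuous bound, and coordinating the block length and truncation level so that the three error orders emerge exactly as claimed. Once that estimate and the finiteness of the covariance sum are in hand, the coupling and Borel--Cantelli steps are routine.
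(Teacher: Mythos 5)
Your proposal is correct in outline and arrives at exactly the three error terms of the theorem, but it follows a genuinely different implementation of Bernstein's blocking method than the paper. You use the Bosq (Th.~1.4) scheme: contiguous big/small blocks of length $p\sim\sqrt h$, a coupling step that replaces the big blocks by independent copies, and then a \emph{single} Chernoff bound for the whole block sum, which forces you to control the moment generating function of a block of \emph{dependent} summands. The paper instead decimates: it splits $\{1,\dots,n\}$ into $l(h)\sim\sqrt h$ interlaced subsums $S_h^{(r)}$, each consisting of $b(h)\sim\sqrt h$ \emph{individual observations} mutually separated by gaps $\sim\sqrt h$; a union bound over the subsums produces the $n/\sqrt h$ prefactor, the Volkonskii--Rozanov inequality factorizes each subsum's mgf into \emph{singleton} mgfs at cost $16(b(h)-1)\alpha(l(h))$, and Petrov's Lemma III.5 ($Ee^{t\epsilon_1}\le e^{gt^2}$ for $|t|\le T$) is applied per observation, with a Riemann-sum argument showing $C(r)=\sum_k K([t_{kl(h)+r}-t_n]/h)^2\sim h/l(h)$. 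The paper's route thus never needs a block-level Cram\'er condition or any within-block dependence estimate -- that difficulty is dissolved by the decimation -- at the price of the subsum bookkeeping and the uniformity in $r$; your route avoids decimation and gets one clean Chernoff bound, but concentrates all the difficulty in the block mgf estimate you yourself flag as delicate.

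On that delicate step your sketch is loose in two repairable places. First, H\"older plus Cram\'er does \emph{not} give $Ee^{\lambda U_1}\le\exp(C\lambda^2\Var(U_1))$ with the true variance $\Var(U_1)=O(p)$: the generalized H\"older bound $Ee^{\lambda U_1}\le\prod_i(Ee^{p\lambda w_i\epsilon_i})^{1/p}$ combined with Petrov's lemma yields only $\exp(gp\lambda^2\sum_i w_i^2)\le\exp(C\lambda^2p^2)$, i.e.\ the variance proxy $p^2$, as if the block were perfectly correlated. Luckily this cruder bound suffices: under your cap $\lambda\lesssim 1/p$ and with $r\sim n/p$ blocks and threshold $xh$, optimizing gives exponent of order $-x^2h/p\sim -c_x\sqrt h$, so the first term $O((n/\sqrt h)e^{-c_2\sqrt h})$ survives (with $c_2$ depending on $x$, as in the paper). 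Second, the coupling cost is not the bare $r\alpha(p)$, since $e^{\lambda U_j}$ is unbounded; after your truncation at level $h$ (whose union-bound cost is $O(ne^{-c_1h})$, not $O(\sqrt h\,e^{-c_1h})$, but is absorbed by the first term anyway -- in the paper this third term instead comes from the remainder block $R_h$), Bradley-type coupling carries a factor $(\|U_j\|_\infty/\xi)^{1/2}\sim\sqrt h$, giving total cost $O(r\sqrt h\,\alpha(\sqrt h))=O(n\alpha(\sqrt h))$, which is exactly the stated second term. Your derivations of (i) and (ii) from the displayed bound, reading $n\alpha(\sqrt h)$ as $\zeta(\sqrt h)^2\alpha(\sqrt h)$ and invoking Borel--Cantelli, coincide with the paper's.
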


\begin{remark}
  By construction of the stopping rule, Theorem~\ref{MyLemma} 
  also makes an assertion about the in-control false-alarm
  rate. Note that our setting implies that the rate
  converges to $ 0 $, as the effective sample size $ h $
  tends to infinity.
\end{remark}

\begin{proof} Put $ S_h = S_{nh} $.
  Fix $ 0 < \gamma < 1 $. Note that $ n/h \sim \zeta $.
  Partition the set $ \{ 1, \dots, n \} $ in blocks of length
  $ l(h) = \lfloor (\zeta h)^{1/2} \gamma \rfloor $ yielding
  $ b(h) = \lfloor n/l(h) \rfloor $ blocks. Note that 
  $ l(h) \sim h^{1/2} $ and $ b(h) \sim h^{1/2} $ up to constants.
  We have
  \begin{eqnarray*}
    S_{h}       & = & \sum_{r=1}^{l(h)} S_h^{(r)} + R_h \\
    S_{h}^{(r)} & = & \sum_{k=1}^{b(h)} K([t_{k b(h)+r}-t_n]/h) \epsilon_{k b(h)+r} \\
    R_{h}       & = & \sum_{i = b(h) l(h) + 1}^n K([t_i-t_n]/h) \epsilon_i.
  \end{eqnarray*}
  W.l.o.g. we can assume $ b(h)l(h) = n $, since
  $ P[ R_{h} > xh ] = O( l(h) e^{-c_1 h} ) $ for some constant $ c_1 > 0 $.
  Next observe that
  $$
    P[ S_{h} > xh ] \le \sum_{r=1}^{b(h)} P[ S_{h}^{(r)} > (xh)/b(h) ].
  $$
  Markov's inequality, Cramer's condition, the strong mixing property, and
  Volonski and Rosanov (1959)
  provide for each $ r = 1, \dots, l(h) $
  \begin{eqnarray*} 
  && \biggl|
    P \left[ S_{h}^{(r)} > \frac{xh}{b(h)} \right]
    -
    e^{-t(xh)/b(h)} \prod_{k=1}^{b(h)} E e^{t K([t_{k l(h)+r}-t_n]/h])\epsilon_k} 
    \biggr| \\
  && \quad \le 16(b(h)-1) \alpha( l(h) ).
  \end{eqnarray*}
  It is well-known that Cramer's condition holds iff. there are constants
  $ g > 0 $ and $ T > 0 $ such that
  $ E e^{t \epsilon_1} \le e^{gt^2} $ for all
  $ |t| \le T $ and $ g > (1/2) E \epsilon_1^2 $ (Petrov (1975), Lemma III.5). 
  Thus,
  \begin{eqnarray*}
    && e^{-tx} \prod_{k=1}^{b(h)} E e^{t K([t_{k l(h)+r}-t_n]/h)\epsilon_i}  \\
    && \qquad \le 
      \exp \biggl\{ gt^2 \sum_{k=1}^{b(h)} K([t_{k l(h)+r}-t_n]/h) - t xh /b(h) \biggr\}
  \end{eqnarray*}
  Minimizing the r.h.s. w.r.t. $ t $ gives the upper bound
  $$
   \left\{ \begin{array}{ll}
      e\left( - \frac{x^2 h^2}{b(h)^2} \frac{1}{2g C(r)} \right), &  (xh)/b(h) \le g T C(r) \\
      e\left( - \frac{xh}{b(h)} \frac{T}{2} \right),              &  (xh)/b(h)   > g T C(r) 
   \end{array} \right.
  $$
  where
  $ C(r) = \sum_{r=1}^{b(h)} K([t_{k l(h)+r}-t_n]/h)^2.$
  Observe that the timepoints 
  $$ t_{k l(h)+r}, k = 1, \dots, b(h) $$ 
  form
  an equidistant partition of an interval converging
  to $ (-\zeta,0) $. The size of the partition equals $ l(h)/h \sim h^{-1/2}$ up to a constant.
  Therefore, using $ \int_{-\zeta}^0 K(s)^2 \, ds = \int_0^{\zeta} K(s)^2 \, ds $,
  $$
  \biggl| \frac{l(h)}{h} \sum_{k=1}^{b(h)} K( [t_{k b(h)+r}-t_n]/h )^2
    - \int_0^{\zeta} K(s)^2 \, ds \biggr| = O\biggl( \frac{l(h)}{h} \biggr).
  $$
  Consequently, $ [l(h)/h)]^{-1} C(r) $ is bounded away from $0$ for large enough $h$.
  Thus, uniformly in  $ r = 1, \dots, b(h) $,
  $$
    P[ S_{h}^{(r)} > (xh)/b(h) ] = O( e^{-c h^{1/2}} ) + O( h^{1/2} \alpha( h^{1/2} ) ),
  $$  
  for some constant $ c > 0 $, yielding
  \begin{eqnarray*}
    P[S_{h} > xh] & \le & \sum_{r=1}^{b(h)} 
      P[ S_{h}^{(r)} > (xh)/b(h) ] + P[ R_h > x ] \\
   & = & 
      O( b(h) e^{-c h^{1/2}} ) 
    + O( b(h) h^{1/2} \alpha( h^{1/2} ) ) 
    + O( l(h) e^{-c_1 h } ) \\
   & = & 
      O\left( \frac{n}{\sqrt{h}} e^{-c_2 \sqrt{h} } \right) + O( n \alpha( \sqrt{h} ) ) + O( \sqrt{h} e^{-c_1 h } ).
  \end{eqnarray*}
  Therefore, the mixing condition
  $$
    \lim_{k \to \infty} k^2 \alpha(k) = 0 
  $$
  ensures 
  $$
    P[ S_{h} > xh ] = o(1), \quad \mbox{as $ h \to \infty$}.
  $$
  Finally, the above estimates and
  $$
    \sum_k k^2 \alpha(k) < \infty
  $$
  yield $ \sum_{h \in \calH} P[ S_{h} > xh ] < \infty $, and an application of Borel-Cantelli provides
  $$ P[ S_h > xh\ i.o. ] = 0. $$
\end{proof}

We may now formulate our main result on the strong law of large numbers for
the normed delay. Define
\begin{equation}
\label{Ch1:DefRho0Expl}
  \rho_0 = \inf
  \left\{ \rho > 0 : \int_0^{\rho} K(s-\rho) m_0(s) ds = c \right\}.
\end{equation}

\begin{theorem}
\label{MainResult}
  Let $ K \in \calK $ be a given kernel and $ m_0 $ be a piecewise Lipschitz continuous generic
  alternative with $ m_0(0) = 0 $ and either $ m_0 \ge 0 $ or $ m_0 \le 0 $
  such that (\ref{Ch1:DefRho0Expl}) exists and $ 0 < \rho_0 < \infty $.
  Then
  $$
    \rho_h \stackrel{a.s.}{\to} \rho_0,
  $$
  as $ h \to \infty $, provided that $ \sum_k k^4 \alpha(k) < \infty $.
\end{theorem}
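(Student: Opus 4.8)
The plan is to split the control statistic $\widehat{m}_{nh}$ into a deterministic drift part and the stochastic noise part $S_{nh}/h$, to show that the drift part converges to the function defining $\rho_0$ while the noise is uniformly negligible, and finally to locate the first crossing of the level $c$. Using $t_i=i$, for $n\ge t_q$ I would write
\begin{equation*}
  \widehat{m}_{nh}
  = \frac{1}{h}\sum_{i=q}^n K\!\left(\frac{i-n}{h}\right) m_0\!\left(\frac{i-q}{h}\right)
    + \frac{1}{h}\, S_{nh}
  =: A_{nh} + \frac{1}{h}\, S_{nh}.
\end{equation*}
Put $\rho=(n-t_q)/h$. The arguments $(i-q)/h$, $i=q,\dots,n$, form an equidistant grid of $[0,\rho]$ with mesh $1/h$, and $(i-n)/h=(i-q)/h-\rho+O(1/h)$. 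Since $K$ is Lipschitz and $m_0$ is piecewise Lipschitz, $A_{nh}$ is a Riemann sum for
\begin{equation*}
  g(\rho):=\int_0^\rho K(s-\rho)\,m_0(s)\,ds,
\end{equation*}
with error $O(1/h)$ uniformly for $\rho$ in compact sets; note that $g$ is precisely the map whose first hitting of $c$ defines $\rho_0$ in (\ref{Ch1:DefRho0Expl}).

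Next I would show that the noise is uniformly small. Fix $M>\rho_0$ and $\delta>0$. On a finite grid $0=\rho_1<\dots<\rho_J=M$ set $n_j=t_q+\lfloor\rho_j h\rfloor$, so that $n_j/h\sim\zeta_j$ and Theorem~\ref{MyLemma}(i) applies to each $S_{n_j h}$, giving $P(|S_{n_j h}|>\delta h)=O(h\,\alpha(\sqrt{h}))$ up to exponentially small terms; a union bound over the $J$ grid points preserves this order. Refining the grid with $h$ and exploiting the Lipschitz property of $K$ to bound the oscillation of $S_{nh}$ between neighbouring grid points, one obtains a bound for $P\big(\sup_{t_q\le n\le t_q+Mh}|S_{nh}|>\delta h\big)$ that is summable over $h\in\calH$; the key point is that passing to the supremum over a window of length of order $h$ costs extra powers, and the resulting series is controlled by $\sum_k k^4\alpha(k)<\infty$ (for instance $\sum_{h\in\calH} h\,\alpha(\sqrt{h})\le C\sum_k k^3\alpha(k)<\infty$). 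Borel--Cantelli then yields $\sup_{t_q\le n\le t_q+Mh}|S_{nh}|/h\to 0$ almost surely.

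Finally I would combine the two parts by a first-crossing argument. Since $g$ is continuous with $g(0)=0<c$ and, by definition of $\rho_0$, $g<c$ on $[0,\rho_0)$, we have $\max_{0\le\rho\le\rho_0-\epsilon}g(\rho)=c-\eta$ for some $\eta>0$; together with the uniform noise bound and the uniform convergence $A_{nh}\to g$, this forces $|\widehat{m}_{nh}|<c$ for every $n\le t_q+(\rho_0-\epsilon)h$, eventually almost surely, so $\liminf_{h}\rho_h\ge\rho_0-\epsilon$. For the converse, choosing $\rho'\in(\rho_0,\rho_0+\epsilon)$ with $g(\rho')>c$ and using $A_{n'h}\to g(\rho')$ together with $S_{n'h}/h\to0$ for $n'=t_q+\lfloor\rho' h\rfloor$ gives $\widehat{m}_{n'h}>c$ eventually, hence $\rho_h\le\rho'<\rho_0+\epsilon$. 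Letting $\epsilon\downarrow0$ yields $\rho_h\to\rho_0$ almost surely.

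I expect the main obstacle to be the uniform almost-sure control of the noise: a single application of Theorem~\ref{MyLemma} handles only one time point $n$ per bandwidth $h$, whereas here the stopping rule must be kept below $c$ simultaneously over a window of length of order $h$, and it is this passage to the supremum --- together with bounding the oscillation of the weighted sums $S_{nh}$ between grid points --- that forces the strengthened mixing condition $\sum_k k^4\alpha(k)<\infty$. A secondary subtlety is that the upper bound on the delay requires $g$ to strictly exceed $c$ immediately after $\rho_0$, a transversality of the crossing that must be drawn from the hypothesis that $\rho_0$ exists with $0<\rho_0<\infty$.
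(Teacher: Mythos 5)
Your proposal is correct in outline and shares the paper's skeleton: the same drift-plus-noise decomposition $\widehat{m}_{nh}=A_{nh}+S_{nh}/h$, the Riemann-sum convergence of the drift to $g(\rho)=\int_0^{\rho}K(s-\rho)m_0(s)\,ds$ using the Lipschitz properties of $K$ and $m_0$, Theorem~\ref{MyLemma} plus Borel--Cantelli for the noise, and a single evaluation point ($n(h)=\lfloor(\rho_0+\varepsilon)h\rfloor$ in the paper, your $n'=t_q+\lfloor\rho' h\rfloor$) for ruling out late stopping. Where you genuinely diverge is the no-early-crossing direction. You aim for an almost-sure uniform bound $\sup_{t_q\le n\le t_q+Mh}|S_{nh}|/h\to 0$ via a finite grid plus an oscillation (chaining) estimate between grid points; the paper instead exploits the discreteness of time: it writes $\{\rho_h<\rho_0-\varepsilon\}$ as $\{q\le N_h\le q+\lfloor(\rho_0-\varepsilon)h\rfloor\}$, union-bounds over all $O(h)$ integer time points $k$ in that window, applies Theorem~\ref{MyLemma}(i) to each (legitimate since $k/h\to\rho_0-\varepsilon$), and obtains $O(h^2\alpha(\sqrt{h}))+O(h\sqrt{h}e^{-c_2\sqrt{h}})$ per $h$, summable under $\sum_k k^4\alpha(k)<\infty$. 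The paper's route avoids your chaining step entirely, and that step is the one place your sketch is incomplete: because the kernel weights change with $n$, the difference $S_{nh}-S_{n'h}$ contains a term of size $\frac{L|n-n'|}{h}\sum_{i\le n'}|\epsilon_i|$ plus boundary terms, so the oscillation bound requires an almost-sure control of $h^{-1}\sum_{i\le Ch}|\epsilon_i|$ --- available from the ergodic theorem for the stationary mixing sequence (with $E|\epsilon_1|<\infty$ from Cramer's condition), but this ingredient must be supplied, and the Lipschitz property of $K$ alone does not do it. Conversely, if carried out with a \emph{fixed} finite grid, your approach union-bounds only $O(1)$ probabilities per $h$ rather than $O(h)$, so it could in principle get by with a weaker mixing rate than the paper's argument; the extra machinery buys a marginally sharper condition, while the paper's brute-force union bound is shorter.

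One further point in your favor: the ``transversality'' subtlety you flag is real and is \emph{shared} by the paper. Definition (\ref{Ch1:DefRho0Expl}) together with continuity of $g$ gives $g<c$ on $[0,\rho_0)$ and $g(\rho_0)=c$, but it does not guarantee $g(\rho)>c$ for $\rho$ slightly larger than $\rho_0$; the paper's proof silently assumes exactly this where it asserts, ``recalling the definition of $\rho_0$,'' the existence of a margin $\kappa>0$ at $n(h)=\lfloor(\rho_0+\varepsilon)h\rfloor$ (and its displayed inequality there even has the sign of the margin garbled). So your explicit acknowledgment that the level-$c$ crossing must be proper is not a defect of your argument relative to the paper's; both proofs need this reading of the hypothesis that $\rho_0$ ``exists'' with $0<\rho_0<\infty$.
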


\begin{remark} The proof even shows complete convergence.
\end{remark}

\begin{proof} W.l.o.g. we assume $ m_0 \ge 0 $.
  Let $ \varepsilon > 0 $. We shall estimate
  $ P[ \rho_h - \rho_0 > \varepsilon ] $ and
  $ P[ \rho_h < \rho_0 - \varepsilon ] $. 
  Put $ n(h) = \lfloor(\rho_0 + \varepsilon)h \rfloor $. 
  Then we have
  \begin{eqnarray*}
    P[ \rho_h - \rho_0 > \varepsilon ]
    & = & P[ N_h > (\rho_0 + \varepsilon) h ] \\
    & \le & P[ | \widehat{m}_{n(h),h} | \le c ] \\
    & = & P\left[ \left| \sum_{i=1}^{n(h)} 
      K_h( t_i - t_n )[m_{ih} + \epsilon_i] \right| \le c \right] \\
    & \le & 
    P\left[ \left| \sum_{i=1}^{n(h)} 
      K_h( t_i - t_n ) \epsilon_i \right| 
      > c - \left| \sum_{i=q}^{n(h)} K_h(t_i-t_n) m_{ih} \right| \right],
  \end{eqnarray*}
  where in the last step we used the fact that
  $ Y_{ih} = \varepsilon_i $ if $ 1 \le i < q = t_q $ and
  $ Y_{ih} = m_0([t_i-t_q]/h) + \epsilon_i $ if $ q \le i \le n(h) $.
  For the following argument we may assume that $ m_0 $ is Lipschitz continuous,
  since otherwise one may argue on subintervals. We have
  \begin{eqnarray*}
    \sum_{i=q}^{n(h)} K_h(t_i-t_n) m_{ih} 
      & = &
      h^{-1} \sum_{i=q}^{n(h)} K([i-n(h)]/h) m_0( [i-q]/h ) \\
      & = & 
      h^{-1} \sum_{i=q}^{n(h)} K(i/h - \rho_0 - \varepsilon ) m_0( i/h )+ O(h) \\
      &=&
      \int_0^{\rho_0+\varepsilon} K(s-\rho_0-\varepsilon) 
         m_0(s) ds + O(h^{-1}) \\
      & = & \int_0^{\rho_0} 
              K(s-\rho_0)m_0(s) ds + O(\varepsilon) + O(h^{-1}),
  \end{eqnarray*}
  since $ q/h \to 0 $ and $ n(h)/h \to \rho_0 + \varepsilon $, as
  $ h \to \infty $, and $ K $ is Lipschitz continuous.
  Recalling the definition of $ \rho_0 $, 
  there exists a constant $ \kappa > 0 $, which depends on $ \varepsilon $, with 
  $ c - | \sum_{i=q}^{n(h)} K_h(t_i-t_n) m_{ih} | \ge \kappa > 0 $, yielding
  $$
    P[ \rho_h - \rho_0 > \varepsilon ] \le P[ S_{n(h)} > \kappa h ].
  $$
  We may now apply Theorem~\ref{MyLemma} (ii) with $ \zeta = \rho_0 + \varepsilon $ 
  to conclude that 
  $$ 
    \sum_h P( \rho_h - \rho_0 >  \varepsilon) < \infty.
  $$
  To estimate $ P[ \rho_h < \rho_0 - \varepsilon ] $ note that
  $$
    \{ \rho_h < \rho_0 - \varepsilon \}
      = \{ q \le N_h \le q + \lfloor (\rho_0+\varepsilon)h \rfloor \},
  $$
  since $ N_h \ge q $ on $ \{ \rho_h < \rho_0 - \varepsilon \} $ by definition of $ \rho_h $.
  We have
  \begin{eqnarray*}
    && P[ q \le N_h \le q + \lfloor (\rho_0-\varepsilon) h \rfloor ]\\
      && \qquad \le 
      P\left[ \max_{q \le k \le q + \lfloor (\rho_0-\varepsilon)h \rfloor }
       | \widehat{m}_{kh} | > c \right] \\ \\
      && \qquad \le 
      \sum_{k=q}^{q + \lfloor (\rho_0-\varepsilon)h \rfloor}
        P\left[ \left| \sum_{i=1}^k K_h(t_i-t_k) \varepsilon_i  
         \right| > c - \left| \sum_{i=q}^{q + \lfloor (\rho_0-\varepsilon)h \rfloor}
         K_h( t_i - t_k ) m_{ih} \right| \right].
  \end{eqnarray*}
  First note that
  $$
    c - \sum_{i=1}^q K_h( t_i - t_k ) m_{ih} 
    \to \int_{\rho_0-\varepsilon}^{\rho_0} K(s-\rho_0) m_0(s) \, ds > 0.
  $$
  Further, $ q \le n \le q + \lfloor (\rho_0-\varepsilon)h \rfloor $
  implies $ n/h \to \rho_0 - \varepsilon $, as $ n,h \to \infty $.
  An application of Theorem~\ref{MyLemma} (i) to each summand with $ n = k $, noting that
  there are $ O(h) $ summands and, of course, $ k = O(h) $, we see that
  \begin{eqnarray*}
    P[ q \le N_h \le q + \lfloor (\rho_0-\varepsilon) h \rfloor ] 
      &=& O( h \sqrt{h} e^{-c_2 \sqrt{h} } ) + O( h^2 \alpha( \sqrt{h} ) ) + O( h \sqrt{h} e^{-c_1 h } ) \\
      &=& o(1),
  \end{eqnarray*}
  as $ h \to \infty $, provided $ k^4 \alpha(k) = o(1) $. Further,
  $
    \sum_k k^4 \alpha(k) < \infty
  $
  implies
  $$
    \sum_h P[ \rho_h - \rho_0 < - \varepsilon ] < \infty
  $$
  yielding complete convergence, 
  $$ 
    \sum_h P[ | \rho_h - \rho_0 | > \varepsilon ] < \infty, \quad \text{for every $ \varepsilon > 0 $},
  $$
  which implies a.s. convergence (e.g. Karr (1993), Prop. 5.7).
\end{proof}

We close this section with a brief discussion of more general time designs.
For some applications it may be possible and reasonable to determine at each time point 
the time points $ t_{n1}, \dots, t_{nn} $ where observations are taken.
For example, one may start with monthly observations and reduce the
distance between successive observations to ensure that the most recent data points are daily measurements.
Note that such a thinning effect can not be obtained by a smoothing kernel.

\begin{remark}
Assume $ F_T $ is a d.f. with support $ [0,1] $ possessing a density $ f_T $. Suppose at the $n$-th time
point we may select the time points where observations are taken. We assume that
$$
  t_{ni} = n F_T^{-1}(i/n), \qquad i = 1, \dots, n.
$$
Clearly, the choice $ t_{ni} = i $ corresponds to the uniform distribution $ F_T(s) = s, $ $ s \in [0,1] $. 
When using skewed time designs, we can ensure that more recent observations dominate the sample of size $n$.
It is straightforward to check that the proofs of Theorem~\ref{MyLemma} and Theorem~\ref{MainResult} also 
work for that choice of time points. In this case we obtain
\begin{eqnarray*}
  \sum_{i=1}^n K_h( t_{ni} - t_{nn} ) m( t_{ni}/h ) 
    & \to & \int_0^{\zeta} K( \zeta( F_T^{-1}(s/\zeta) - 1 ) ) m_0( F_T^{-1}(s/\zeta) ) \, ds \\
    & =   & \zeta \int_0^1 K( \zeta (s - 1) ) m_0(s) f_T(s) \, ds,
\end{eqnarray*}
if $ n/h \sim \zeta $, yielding
$$
  \rho_h \stackrel{a.s.}{\to} 
  \inf \left\{ \rho > 0 : \rho \int_0^1 K( \rho (s - 1) ) m_0(s) f_T(s) \, ds = c \right\},
$$
as $ h \to \infty $, under the conditions of Theorem~\ref{MainResult}.
\end{remark}

\section{Optimal kernels}
\label{Optimal}

The result of the previous section suggests the following kernel selection procedure.
Suppose we are given a finite set $ \{ K_1, \dots, K_M \} $ of candidate
kernels. Then we may choose the kernel which minimizes the corresponding
asymptotic normed delay $ \rho^* $. For an example where this selection rule was
successfully applied to a real data set see Steland (2002c).

However, the natural question arises how to optimize the
asymptotic normed delay with respect to the smoothing kernel $ K $.
It turns out that for the setting studied in this paper a meaningful
result can be obtained. The canonical solution of the functional optimization 
problem can be given explicitly for arguments not exceeding the asymptotic
optimal delay. Indeed, the optimal kernel
is equal to a composition of the generic alternative and 
a time-reversal transformation which depends on the optimal asymptotic
normed delay.

Although in this paper we assume that $m$ is continuous at $ t = t_q $, let us briefly
discuss the discontinuous classical change-point model given by 
$ m(s) = a\eins(s \ge t_q), a > 0 $ for all $ s \in \calT $.
Brodsky and Darkhovsky (1993, Th. 4.2.8) have shown
that the normed delay of the regular stopping rule 
(\ref{DecisionRule}) converges with probability 1, i.e., 
$
  \rho_h \stackrel{a.s.}{\to} \rho_0, 
$
as $ h \to \infty $, where the constant $ \rho_0 $ is given by
$$
  \rho_0 = \inf\left\{ \rho : \int_0^{\rho} K(s) ds = c/a \right\}.
$$
If $ F_{K}(z) = \int_{-\infty}^z K(s)\, ds,\ z \in \R, $ 
denotes the associated distribution function, we have the
explicit solution
$
  \rho_0(K) = F_{K}^{-1}( 1/2 - c/a ).
$
It is easy to show that for every $ c > 0 $ there exists a symmetric kernel
with unit variance and bounded support such that the functional $ \rho_0 $ vanishes.


Therefore, in the sequel we assume that $ m_0 $ is a non-constant function.
It will turn out that we now obtain solutions with non-vanishing optimal asymptotic normed
delay. This allows to define an ordering relation on the set of admissible
generic alternatives by comparing the optimal asymptotic normed delays.
Anticipating the relationship between the optimal
kernel and $ m_0 $, we assume that $ m_0 $ is Lipschitz continuous.
For the optimality result of this section we also have to assume the
following stronger regularity assumptions on the class of admissible kernels.
\begin{itemize}
  \item[(K1)] $ \calK $ is a class of uniformly Lipschitz continuous 
  probability densities with Lipschitz constant $ L $, i.e.,
  $$
    \sup_{K \in \calK} | K(z_1) - K(z_2) | \le L | z_1 - z_2 | \qquad (z_1,z_2 \in \R).
  $$
  \item[(K2)] The class $ \calK $ is uniformly bounded, i.e.,
  $$
    \| \calK \|_\infty = \sup_{K \in \calK} \| K \|_\infty \le C_{\calK} < \infty
  $$
  holds true for some constant $ C_{\calK} $.
\end{itemize}

Define the mapping $ I : \calK \times [0,\infty) \to \R $,
$$
  I( K, \rho ) = \int_0^{\rho} K(s-\rho) m_0(s) ds,
$$
and denote by
$$
  \calR( \rho ) = \{ I(K,\rho) : K \in \calK \}
$$
the {\em reachable set at time $ \rho $}. 
It is clear that $ \calR( \rho ) $
is closed when $ \calK $ is equipped with the uniform topology induced
by the supnorm.

\begin{definition}
A pair $ (K^*,\rho^*) \in \calK \times [0,\infty) $  
is {\em optimal}, if
$$
  \rho^* = \inf \{ \rho : c \in \calR( \rho ) \}
$$
and $ K^* $ ensures that
$$
  I( K^*, \rho^* ) = c.
$$
\end{definition}

For fixed $ K \in \calK $ define 
$$
  \Psi(\rho) = I(K,\rho) = \int_0^{\rho} K(s-\rho)m_0(s) \, ds.
$$
We will assume that there exists a positive $ R \in \R $ 
such that $ \Psi $ is a strictly increasing function on $ [0,R) $. 
Note that $ \Psi $ is continuous since $ K $ is Lipschitz continuous
by assumption. We have the following theorem on the existence of
optimal kernels.

\begin{theorem}
\label{ThEx}
Assume there exists $ K_1 \in \calK $ and some $ \rho_1 \ge 0 $ with
$$
  I( K_1, \rho_1 ) = \int_0^{\rho_1} K_1(s-\rho_1)m_0(s) ds = c.
$$
Then there exists an optimal kernel $ K^* \in \calK $, i.e.,
$$
  I( K^*, \rho^* ) = c
$$
where $ \rho^* = \inf\{ \rho : c \in \calR( \rho ) \} $ is the optimal
asymptotic normed delay.
\end{theorem}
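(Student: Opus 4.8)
The plan is to show that the infimum defining $\rho^*$ is actually attained, i.e. that $c \in \calR(\rho^*)$; once this is known, the existence of an optimal kernel $K^*$ is immediate, since $c \in \calR(\rho^*)$ means precisely that some $K^* \in \calK$ satisfies $I(K^*,\rho^*)=c$. First I would record the easy preliminaries. The hypothesis supplies a pair $(K_1,\rho_1)$ with $I(K_1,\rho_1)=c$, so the set $\{\rho \ge 0 : c \in \calR(\rho)\}$ is nonempty and contains $\rho_1<\infty$; hence $\rho^* = \inf\{\rho : c \in \calR(\rho)\}$ is a well-defined finite nonnegative number. I would then choose a sequence $\rho_n \downarrow \rho^*$ with $c \in \calR(\rho_n)$ and, for each $n$, a kernel $K_n \in \calK$ realizing $I(K_n,\rho_n)=c$.

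The central step is to transport the identity $I(K_n,\rho_n)=c$ down to the level $\rho^*$. The substitution $u=s-\rho$ rewrites the functional as $I(K,\rho)=\int_{-\rho}^{0} K(u)\,m_0(u+\rho)\,du$, an integral over the bounded interval $[-\rho,0]$ that depends on $K$ only through its restriction to that interval. I would estimate $|I(K_n,\rho_n)-I(K_n,\rho^*)|$ by splitting it into the contribution of the thin slice $[-\rho_n,-\rho^*]$ and the contribution of the shift in the argument of $m_0$ on $[-\rho^*,0]$. Using the uniform sup-norm bound $\|K_n\|_\infty \le C_{\calK}$ from (K2) together with the Lipschitz continuity (hence local boundedness) of $m_0$, each piece is of order $|\rho_n-\rho^*|$ with a constant independent of $n$. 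Consequently $I(K_n,\rho^*)\to c$ as $n\to\infty$.

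Finally, each $I(K_n,\rho^*)$ lies in the reachable set $\calR(\rho^*)$, and $\calR(\rho^*)$ is closed by the remark preceding the theorem. Therefore the limit $c$ belongs to $\calR(\rho^*)$, and any $K^* \in \calK$ with $I(K^*,\rho^*)=c$ is the desired optimal kernel. I expect the main obstacle to be exactly the uniform-in-$n$ vanishing of $|I(K_n,\rho_n)-I(K_n,\rho^*)|$: this is where (K2) is indispensable, because without a uniform bound on $\|K\|_\infty$ the kernels $K_n$ could concentrate mass near the endpoint of the integration interval, so that $\rho \mapsto I(K,\rho)$ would fail to be equicontinuous over $\calK$ and the infimum need not be attained. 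The remaining ingredients — nonemptiness and closedness — are furnished directly by the hypothesis and by the standing assumptions.
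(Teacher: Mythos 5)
Your proposal is correct and follows the same overall architecture as the paper's proof: take a non-increasing sequence $\rho_n \to \rho^*$ with kernels $K_n$ satisfying $I(K_n,\rho_n)=c$, show $|I(K_n,\rho_n)-I(K_n,\rho^*)| \to 0$ with bounds uniform in $n$, and conclude $c \in \calR(\rho^*)$ from the closedness of the reachable set (which the paper, like you, asserts without proof). The one genuine difference lies in how the key estimate is carried out. The paper keeps the integrals in the form $\int_0^{\rho} K_n(s-\rho)m_0(s)\,ds$ and splits into a shift term $\int_0^{\rho^*}[K_n(s-\rho^*)-K_n(s-\rho_n)]m_0(s)\,ds$, bounded by $L|\rho^*-\rho_n|\int_0^{\rho^*}m_0(s)\,ds$ using the \emph{uniform kernel Lipschitz condition} (K1), plus a slice term $\int_{\rho^*}^{\rho_n}K_n(s-\rho_n)m_0(s)\,ds$, bounded via Cauchy--Schwarz together with (K2). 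Your substitution $u=s-\rho$ instead transfers the shift into the argument of $m_0$, so your shift term is controlled by the Lipschitz continuity of $m_0$ (a standing assumption of Section 4) together with $\int K_n \le 1$, and your slice term by the crude sup bound $\|K_n\|_\infty \le C_{\calK}$ and local boundedness of $m_0$. The net effect is that your argument uses only (K2) and the regularity of $m_0$, dispensing with (K1) entirely for this theorem, whereas the paper's version would survive a merely piecewise-continuous $m_0$ at the price of needing (K1); your route is thus marginally leaner under the section's stated hypotheses, and your closing remark correctly identifies (K2) as the hypothesis preventing mass concentration at the endpoint, which is exactly where attainment of the infimum could otherwise fail.
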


\begin{remark} For many generic alternatives $ m_0 $ it should be a trivial task
   to verify the condition of Theorem~\ref{ThEx} holds true.
\end{remark}

\begin{proof}
By assumption we have $ c \in \calR( \rho_1 ) $. Let 
$ \rho^* = \inf\{ \rho : c \in \calR( \rho ) \} $. Then 
$ 0 \le \rho^* \le \rho_1 $. We shall show $ c \in \calR( \rho^* ) $.
Then, by definition of $ \calR( \rho^*) $, there exists an optimal kernel
$ K^* \in \calK $ with $ I( K^*, \rho^* ) = c $.
$ c \in \calR( \rho^* ) $ is a consequence of the following
continuity argument. There exists a non-increasing sequence
$ \{ \rho_n \} $ with $ \rho_n \to \rho^* $ and an associated
sequence $ \{ K_n \} \subset \calK $ with
$$
  I( K_n, \rho_n ) = c \in \calR( \rho_n ).
$$
Since $ K_n \in \calK $, $ I( K_n, \rho^* ) \in \calR( \rho^* ) $.
We have
\begin{eqnarray*}
  && | I( K_n, \rho^* ) - I( K_n, \rho_n ) | \\
  && \ = \left|
    \int_0^{\rho^*} K_n(s-\rho^*) m_0(s) ds - \int_0^{\rho_n} K_n(s-\rho_n) m_0(s) ds 
    \right| \\
  && \ =
  \biggl|
   \int_0^{\rho^*} [K_n(s-\rho^*) - K(s-\rho_n)] m_0(s) \, ds
   \\
  && \qquad -
   \int_{\rho^*}^{\rho_n} K_n(s - \rho_n)m_0(s) \, ds
  \biggr| \\
  && \ \le
  L | \rho^* - \rho_n | \int_0^{\rho^*} m_0(s) \, ds \\
  &&  \qquad
  + \left[ \int_{\rho^*}^{\rho_n} K_n^2(s-\rho_n) \, ds \right]^{1/2} 
    \left[ \int_{\rho^*}^{\rho_n} m_0^2(s) \, ds \right]^{1/2}.
\end{eqnarray*}
Clearly, by assumption (K2), 
$ \int_{\rho^*}^{\rho_n} K_n^2(s-\rho_n) \, ds \le C_{\calK}^2 | \rho^* - \rho_n | $.
Therefore, 
$$
  | I(K_n, \rho^*) - I(K_n,\rho_n) | = o(1),
$$
as $ n \to \infty $, yielding $ I(K_n,\rho^*) \to c $, as $ n \to \infty $.
Since $ I(K_n,\rho^*) \in \calR( \rho^* ) $ for all $ n $ and 
since $ \calR( \rho^* ) $ is closed, we obtain
$$
  c = \lim_{n \to \infty} I(K_n,\rho^*) \in \calR( \rho^* ).
$$
\end{proof}

The following Lemma provides an useful characterization of each
optimal pair $ (K^*,\rho^*) $ and is crucial to calculate optimal kernels.

\begin{lemma}
\label{OLemma}
Assume $ (\rho^*,K^*) \in [0,R] \times \calK $ is optimal. Then 
$$
  \int_0^{\rho^*} K^*(s-\rho^*) m_0(s) \, ds =
  \sup_{K \in \calK} \int_0^{\rho^*} K(\rho^*-s)m_0(s) \, ds
$$
\end{lemma}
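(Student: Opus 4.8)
The plan is to recognise this as a \emph{minimal-time reachability} identity: at the earliest instant $\rho^*$ at which the target value $c$ becomes reachable, the optimal kernel must saturate the reachable set, i.e.\ it must maximise $I(\cdot,\rho^*)$ over $\calK$. Since every $K\in\calK$ is symmetric we have $K(\rho^*-s)=K(s-\rho^*)$, so the right-hand side is exactly $\sup_{K\in\calK} I(K,\rho^*)$ and the assertion reduces to
$$
  I(K^*,\rho^*) = \sup_{K\in\calK} I(K,\rho^*).
$$
One inequality is free: optimality of $(K^*,\rho^*)$ gives $I(K^*,\rho^*)=c$, hence $\sup_{K\in\calK} I(K,\rho^*)\ge c$. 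It remains to exclude the strict inequality $\sup_{K\in\calK} I(K,\rho^*) > c$, and by symmetry and optimality this is equivalent to proving that no $K\in\calK$ satisfies $I(K,\rho^*) > c$.

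I would argue by contradiction. The degenerate case $c=0$ forces $\rho^*=0$, whence $I(K,\rho^*)=\int_0^0 K(s-\rho^*)m_0(s)\,ds=0$ for every $K$ and the identity is trivial; so assume $c>0$. Suppose some $K_0\in\calK$ has $I(K_0,\rho^*)>c$. The map $\rho\mapsto I(K_0,\rho)$ is continuous on $[0,R]$ (as already noted, because $K_0$ is Lipschitz), it vanishes at $\rho=0$, and it exceeds $c$ at $\rho=\rho^*\le R$. Since $0=I(K_0,0)<c<I(K_0,\rho^*)$, the intermediate value theorem furnishes some $\rho'\in(0,\rho^*)$ with $I(K_0,\rho')=c$. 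Thus $c\in\calR(\rho')$ with $\rho'<\rho^*$, contradicting $\rho^*=\inf\{\rho:c\in\calR(\rho)\}$. Hence no such $K_0$ exists, $\sup_{K\in\calK} I(K,\rho^*)\le c$, and combining with the reverse inequality yields the claim.

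The only genuinely delicate point — and the main obstacle to watch — is guaranteeing that the level $c$ is attained \emph{strictly} before $\rho^*$ rather than exactly at it; this is secured by the \emph{strict} inequality $I(K_0,\rho^*)>c$, which places $c$ in the open interval $(0,I(K_0,\rho^*))$ so that the crossing point cannot coincide with the endpoint $\rho^*$. Here the standing assumption that each $\rho\mapsto I(K,\rho)$ is strictly increasing on $[0,R)$ (with $\rho^*\in[0,R]$) is convenient, since it makes the crossing point unique and keeps the reachable level non-decreasing up to $\rho^*$; however the contradiction itself rests only on the continuity of $I(K_0,\cdot)$ together with the infimum characterisation of $\rho^*$. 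I would finally remark that the same argument shows $\calR(\rho^*)$ meets $c$ precisely at its boundary, which is the structural fact exploited in the subsequent representation of $K^*$.
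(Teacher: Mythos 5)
Your proof is correct and takes essentially the same route as the paper's: assuming some kernel achieves a value strictly above $c$ at $\rho^*$, continuity of $\rho \mapsto I(K,\rho)$ (the paper additionally invokes the standing strict-monotonicity assumption) produces an earlier time $\rho' < \rho^*$ at which the level $c$ is attained, contradicting the minimality of $\rho^*$. Your extra touches --- reconciling $K(s-\rho^*)$ with $K(\rho^*-s)$ via the symmetry of $\calK$, disposing of the degenerate case $c=0$, and noting that the intermediate value theorem makes strict monotonicity dispensable for the contradiction --- are sound refinements of, not departures from, the paper's argument.
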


\begin{proof}
Assume there exists some $ \widetilde{K} \in \calK $ with
$$
  \int_0^{\rho^*} \widetilde{K}(\rho^*-s) m_0(s) \, ds
  >
  \int_0^{\rho^*} K^*(\rho^*-s) m_0(s) \, ds = c.
$$
Since $ \rho \mapsto \int_0^{\rho} \widetilde{K}(\rho-s) m_0(s) \, ds $
is strictly increasing and continuous for $ \rho \in [0,\rho^*] $, there exists a $ \rho^{**} $
with $ \rho^{**} < \rho^* $ such that
$$ 
  \int_0^{\rho^{**}} \widetilde{K}(\rho^{**}-s)m_0(s) \, ds = c,
$$
implying that the pair $ (\rho^*, K^*) $ is not optimal which is a contradiction.
\end{proof}

We are now in a position to formulate and prove the following result about
the explicit representation of the optimal kernel for a given generic alternative.

\begin{theorem} 
\label{OptimalKernel}
In addition to the regularity assumptions of this section assume 
$$
  0 < \int_0^\infty m_0(s) \, ds < \infty
$$
and that the set
$$
  \calS = \left \{  \rho \ge 0 : 
    \frac{ \int_0^{\rho} m_0^2(s) \, ds}{ \int_0^{\infty} m_0(s) \, ds } = c
  \right\}
$$
is non-empty. Then the following conclusions hold true.
\begin{itemize}
\item[(i)] The optimal asymptotic normed delay is given by $ \rho^* = \inf \calS $
\item[(ii)] The optimal smoothing kernel $ K^* $ satisfies
$$
  K^*(z) = \frac{ m_0(\rho^*-|z|) }{ 2 \int_0^\infty m_0(s) \, ds }, \qquad
  z \in [-\rho^*, \rho^*].
$$
\end{itemize}
\end{theorem}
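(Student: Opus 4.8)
The plan is to combine the variational characterization in Lemma~\ref{OLemma} with a Cauchy--Schwarz (matched-filter) argument. By Lemma~\ref{OLemma}, any optimal kernel $K^*$ must maximize the functional $K \mapsto \int_0^{\rho^*} K(\rho^*-s)\,m_0(s)\,ds$ over $\calK$, so the whole problem splits into (a) solving this fixed-$\rho$ maximization explicitly and (b) locating the smallest $\rho$ at which the maximal value reaches $c$. First I would substitute $u=\rho-s$ and use the symmetry $K(-u)=K(u)$ to rewrite $I(K,\rho)=\int_0^{\rho}K(u)\,m_0(\rho-u)\,du$, exhibiting it as the $L^2([0,\rho])$ inner product of the left half of $K$ with the time-reversed signal $m_0(\rho-\cdot)$. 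In this form it is already clear that any mass of $K$ placed outside $[-\rho,\rho]$ is wasted, so an optimal kernel should be supported in $[-\rho,\rho]$.

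Next I would apply Cauchy--Schwarz to this inner product. Comparing kernels on the scale of the standardized detection statistic (whose asymptotic standard deviation is proportional to $\|K\|_2$), the relevant quantity to be driven to the threshold is $I(K,\rho)/\|K\|_2$, and Cauchy--Schwarz then gives $I(K,\rho)\le \|K\|_2\,(\int_0^\rho m_0^2)^{1/2}$ with a $K$-independent right-hand side, equality holding precisely when $K$ is proportional to $m_0(\rho-\cdot)$ on $[0,\rho]$ and vanishes outside $[-\rho,\rho]$. This forces the matched-filter shape $K^*(z)\propto m_0(\rho-|z|)$; the proportionality constant is then fixed by the requirement that $K^*$ be a probability density, yielding exactly the expression in part~(ii). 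At this point I would verify admissibility, i.e. $K^*\in\calK$: non-negativity and symmetry are immediate, while Lipschitz continuity and the uniform bound demanded by (K1)/(K2) are inherited from the corresponding properties of $m_0$.

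Substituting the matched filter back gives the closed form $\Phi(\rho):=\sup_{K\in\calK} I(K,\rho)=\frac{1}{2}\big(\int_0^\infty m_0\big)^{-1}\int_0^\rho m_0^2$, which is continuous and, since $m_0$ is positive on a set of positive measure, strictly increasing on $[0,R)$. Because $\calR(\rho)$ is a closed interval with left endpoint $0$, the condition $c\in\calR(\rho)$ is equivalent to $\Phi(\rho)\ge c$; the strict monotonicity and continuity of $\Phi$ then identify the level $\Phi(\rho)=c$ with membership in $\calS$, and taking the infimum yields part~(i), namely $\rho^*=\inf\calS$. The optimal kernel of part~(ii) is precisely the maximizer already produced at this value $\rho^*$.

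The main obstacle, I expect, is the maximization step: on the unnormalized functional $I(K,\cdot)$ there is no density-maximizer at all (the mass concentrates to a point), so the crux is to set up the comparison so that Cauchy--Schwarz delivers a $K$-independent bound, and then to confirm that the equality case is genuinely admissible --- that the matched filter is Lipschitz, uniformly bounded, and of unit mass after its support is restricted to $[-\rho^*,\rho^*]$. Pinning down the normalizing constant and establishing the strict monotonicity and continuity of $\Phi$ needed to pass from the level-set equation to $\rho^*=\inf\calS$ are the remaining technical points.
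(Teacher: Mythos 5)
Your overall route is the paper's own: invoke the extremal characterization of Lemma~\ref{OLemma}, apply Cauchy--Schwarz to $I(K,\rho^*)=\int_0^{\rho^*}K(\rho^*-s)m_0(s)\,ds$, read the matched-filter shape off the equality case, fix the multiplicative constant from the density constraint, and substitute back to obtain the level equation defining $\rho^*$. But there is a genuine gap at the normalization step, and it propagates. First, your claim that Cauchy--Schwarz delivers a ``$K$-independent right-hand side'' holds only for the ratio $I(K,\rho)/\|K\|_2$; maximizing that ratio is a different problem from the one Lemma~\ref{OLemma} poses, which concerns the unnormalized functional $I(K,\rho^*)$ --- the threshold $c$ applies to the raw statistic $\widehat{m}_{nh}$, and no variance standardization enters the paper's delay criterion. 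The equality case of the ratio problem determines $K$ only up to a constant, so everything hinges on how you fix it, and here your recipe fails: you restrict the support of $K^*$ to $[-\rho^*,\rho^*]$ (``mass outside is wasted'') and normalize to unit mass there, but since $\int_{-\rho^*}^{\rho^*}m_0(\rho^*-|z|)\,dz = 2\int_0^{\rho^*}m_0(s)\,ds$, this yields the constant $\bigl(2\int_0^{\rho^*}m_0\bigr)^{-1}$, not the theorem's $\bigl(2\int_0^{\infty}m_0\bigr)^{-1}$; the two agree only when $m_0$ vanishes beyond $\rho^*$. The paper instead imposes the proportionality $K(\rho^*-s)=\lambda m_0(s)$ on all of $s\ge 0$ and uses $\int_0^\infty K=1/2$ to get $\lambda^{-1}=2\int_0^\infty m_0$: in general the optimal kernel is \emph{not} supported in $[-\rho^*,\rho^*]$; conclusion (ii) pins $K^*$ down only on that interval, and the leftover mass $1-\int_0^{\rho^*}m_0/\int_0^{\infty}m_0$ sits outside, where it does not affect $I(K^*,\rho^*)$.

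This makes your argument internally inconsistent: with your normalization the matched filter attains $I=\int_0^{\rho}m_0^2/\bigl(2\int_0^{\rho}m_0\bigr)$, which is at least as large as --- and strictly larger than, whenever $m_0$ has mass beyond $\rho$ --- the value $\int_0^{\rho}m_0^2/\bigl(2\int_0^{\infty}m_0\bigr)$ you then write down for $\Phi(\rho)$, so your own construction would reach the threshold $c$ strictly before $\inf\calS$, contradicting the conclusion (i) you are trying to prove. To recover the theorem you must drop the support restriction, derive the constant as the paper does, and only then run your continuity/monotonicity argument identifying $\rho^*$ with the first root of the level equation; that part of your plan, together with the explicit admissibility check $K^*\in\calK$ via (K1)--(K2), is sound and in fact more careful than the paper, which omits both. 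One wrinkle you inherit rather than create: the paper's proof ends with $c=\int_0^{\rho^*}m_0^2/\bigl(2\int_0^\infty m_0\bigr)$, which matches the definition of $\calS$ only up to a factor $2$, and your identification of the level set of $\Phi$ with $\calS$ carries the same mismatch.
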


\begin{proof}
Let $ K \in \calK $ be an arbitrary candidate kernel.
By the Cauchy--Schwarz inequality we have
$$
  \left | \int_0^{\rho^*} K(\rho^* - s) m_0(s) \, ds \right|
  \le
  \left[ \int_0^{\rho^*} K^2(\rho^* - s) \, ds \right]^{1/2}
  \left[ \int_0^{\rho^*} m_0^2(s) \, ds \right]^{1/2}
$$
with equality if and only if
$$
  K( \rho^* - s ) = \lambda \cdot m_0(s), \qquad \forall s \in [0,\rho^*]
$$
for some constant $ \lambda \in \R $. 
Since $ \int_0^{\infty} K(s) ds = 1/2 $ and $ m_0(s) = 0 $ if $ s < 0 $,
$$
  \lambda^{-1} = 2 \int_0^{\infty} m_0(s) \, ds.
$$
Therefore, since $ \calK $ is a symmetric class,
$$
  K^*(s) = \frac{ m_0(\rho^*-|s|) }{ 2 \int_0^{\infty} m_0(s) \, ds }, \qquad
  s \in [-\rho^*,\rho^*].
$$
Finally, we obtain
$$
  c = \int_0^{\rho^*} K^*( \rho^* - s ) m_0(s) \, ds
    = \frac{ \int_0^{\rho^*} m_0^2(s) \, ds }{ 2 \int_0^{\infty} m_0(s) \, ds }.
$$
\end{proof}

\section{Examples} 

Let us consider some special cases to illustrate the results.
\begin{example}
For a truncated linear drift, 
  $$
    m_0(t) = a t \eins_{[0,T]}(t), \ t \ge 0,
  $$
  we obtain
  $
    \rho^* = \frac{3}{2} \frac{c}{a}
  $
  and 
  $$
    K^*(z) = \frac{1}{T^2} \left( \frac{3c}{2a} - |z| \right), 
    \qquad | z | \le \frac{3c}{2a}
  $$
\end{example}

\begin{example}
 Assume the generic alternative is given by a truncated exponential drift 
    $$ 
      m_0(t) = e^{\lambda t } \eins_{[0,T]}(t), t \ge 0, 
    $$ 
    for some $ \lambda \in \R $, where $ T $ is a positive truncation constant.
    If $ T \ge \rho^* $, we have
    $$
      c = \frac{ \int_0^{\rho^*} m_0^2(s) \, ds }
               { \int_0^{\rho^*} m_0(s) \, ds }
        = e^{\lambda \rho^*} + 1.
    $$
    Hence, the optimal asymptotic normed delay is given by
    $$
      \rho^* = \frac{\ln(c-1)}{\lambda}.
    $$
    The optimal kernel $ K^* $ is given by
    $$
      K^*(z) =  \frac{1}{2} \frac{e^{\lambda \rho^*}}{e^{\lambda T} - 1} \lambda e^{-\lambda |z|}.
    $$
    Note that $ K^* $ converges to the density of the Laplace
    distribution, $ (\lambda/2)e^{-\lambda |z|} $, 
    if $ \rho^* = T \to \infty $.
    Hence, for exponential drifts exponential weighting schemes are 
    asymptotically optimal in this sense.
\end{example}

\begin{example}
Usually, enzyme processes are described by the Michaelis-Menten 
  framework. Exploiting the quasi-steady-state approximations,
  the enzyme kinetic can be summarized by the differential equation
  $$
    \frac{d[S]}{dt} = -\frac{v_{\max} [S]}{K_M + [S]}
  $$
  with initial condition $ [S](0) = [S_0] $, 
  where $ [S](t) $ stands for the substrate concentration at time $t$,
  $ K_M $ denotes the Michaelis-Menten rate constant, and $ v_{\max} $ 
  is the maximal velocity. For further details we refer to Schnell and
  Mendoza (1997).
  The solution of the differential equation is given by
  $$
    [S](t) = K_M W\left( \frac{[S_0]}{K_M} \exp\left( \frac{-v_{\max} t 
    [S_0]}{K_M} \right) \right),
  $$
  where $ W $ stands for Euler's omega function, 
  the (principal branch of the) inverse of the function
  $ x \mapsto x \exp(x) $ (Euler (1777), Corless et al. (1996)).
  The optimal kernel to detect the generic alternative 
  $$ 
    m_0(t) = (S_0 - [S])(t) \eins_{[0,T]}(t) 
  $$ 
  is given by
  $$
   K^*(z) = (S_0-[S])( \rho^* - |z|) / C_S, \qquad |z| \le \rho^*.
  $$
  Observing that $ \int_a^b W( d e^\lambda t ) \, dt = \int_{de^{\lambda a}}^{de^{\lambda b}} W(y)y^{-1} \, dy $, 
  $ d, \lambda \in \R $, and using the formulas
  \begin{eqnarray*}
    \int_a^b W(y)/y \, dy   & = & \left. \frac{1}{6} W(y)^2[3+2W(y)] \right|_a^b, \\
    \int_a^b W(y)^2/y \, dy & = & \left. \frac{1}{2} W(y)[2+3W(y)] \right|_a^b,
  \end{eqnarray*}
  one may obtain explicit formulas for 
  $ C_S = 2 \int_0^T (S_0-[S])(t) \, dt $ and for the enumerator and denominator
  of the nonlinear equation
  $$
     c = \frac{ \int_0^{\rho^*} m_0^2(s) \, ds }{ \int_0^{\rho^*} m_0(s) \, ds }.
  $$
\end{example}

\section*{Acknowledgments}

The author is grateful to Prof. E. Rafaj\l owicz, Technical University
of Wroc\l aw, Poland, for helpful comments and a lecture on optimal kernel choice,
and two anonymous referees for their valuable remarks.

The financial support of the Deutsche Forschungsgemeinschaft (SFB 475
,,Reduction of complexity in multivariate data structures'') is gratefully
acknowledged.

\end{document}